\newcommand{\bburl}[1]{\textcolor{blue}{\url{#1}}}
\newcommand{\be}{\begin{equation}}
\newcommand{\ee}{\end{equation}}
\newcommand{\bea}{\begin{eqnarray}}
\newcommand{\eea}{\end{eqnarray}}
\newtheorem{thm}{Theorem}[section]
\newtheorem{cor}[thm]{Corollary}
\newtheorem{lem}[thm]{Lemma}
\newtheorem{prop}[thm]{Proposition}
\newtheorem{exa}[thm]{Example}
\newtheorem{defi}[thm]{Definition}
\newtheorem{rek}[thm]{Remark}
\newtheorem{que}[thm]{Question}
\numberwithin{equation}{section}
\providecommand{\floor}[1]{\lfloor #1 \rfloor}
\title{On Sets with More Products than Quotients}
\author{H\`ung Vi\d{\^e}t Chu}
\address{Department of Mathematics, University of Illinois at Urbana-Champaign, Urbana, Illinois 61820, USA.}
\email{\textcolor{blue}{\href{mailto:chuh19@mail.wlu.edu}{chuh19@mail.wlu.edu}}}
\date{\today}
\keywords{product set, quotient set, MSTD}
\subjclass[2000]{11N99}
\begin{document}

\begin{abstract}
Given a finite set $A\subset \mathbb{R}\backslash \{0\}$, define 
\begin{align*}&A\cdot A \ =\ \{a_i\cdot a_j\,|\, a_i,a_j\in A\},\\
&A/A  \ =\ \{a_i/a_j\,|\,a_i,a_j\in A\},\\
&A + A \ =\ \{a_i + a_j\,|\, a_i,a_j\in A\},\\
&A - A  \ =\ \{a_i - a_j\,|\,a_i,a_j\in A\}.\end{align*}
The set $A$ is said to be MPTQ (more product than quotient) if $|A\cdot A|>|A/A|$ and MSTD (more sum than difference) if $|A + A|>|A - A|$. Since multiplication and addition are commutative while division and subtraction are not, it is natural to think that MPTQ and MSTD sets are very rare. However, they do exist. This paper first shows an efficient search for MPTQ subsets of $\{1,2,\ldots,n\}$ and proves that as $n\rightarrow \infty$, the proportion of MPTQ subsets approaches $0$. Next, we prove that MPTQ sets of positive numbers must have at least $8$ elements, while MPTQ sets of both negative and positive numbers must have at least $5$ elements. Finally, we investigate several sequences that do not have MPTQ subsets. 
\end{abstract}

\maketitle

\section{Introduction and Main Results}
\subsection{Introduction}
Given a finite set $A\subset \mathbb{R}\backslash \{0\}$, define 
\begin{align*}&A\cdot A \ =\ \{a_i\cdot a_j\,|\, a_i,a_j\in A\},\\
&A/A  \ =\ \{a_i/a_j\,|\,a_i,a_j\in A\}.\end{align*}
The set $A$ is said to be MPTQ (more product than quotient) if $|A\cdot A|>|A/A|$, quotient-dominated if $|A\cdot A|<|A/A|$, and balanced if $|A\cdot A| = |A/A|$. Also, define \begin{align*}A + A \ =\ \{a_i + a_j\,|\, a_i,a_j\in A\},\\A - A  \ =\ \{a_i - a_j\,|\,a_i,a_j\in A\}.\end{align*}
The set $A$ is said to be MSTD (more sum than difference) if $|A + A|>|A - A|$.
We consider MPTQ and MSTD subsets of $\mathbb{R}$ (instead of $\mathbb{N}$ as in previous work) because this extension allows us to define the log transformation and the exponential transformation, which are crucial in describing the relationship between the two types of sets. Since multiplication and addition are commutative while division and subtraction are not, it is natural to think that MPTQ and MSTD sets are very rare. However, they do exist. It is believed that Conway (1969) was the first to give an example of the MSTD set
$$\{0,2,3,4,7,11,12,14\},$$
whose sum set and difference set have $26$ elements and $25$ elements, respectively. The number of MSTD subsets of $\{1,2,\ldots,n\}$ grows quite quickly as $n$ grows. On the other hand, it is harder to find MPTQ subsets of $\{1,2,\ldots, n\}$ because $\{1,2,\ldots,36\}$ contains no MPTQ subsets. Hence, we instead look for MPTQ subsets of $\{2^n\cdot 3^m\,|\,0\le n,m\le 6\}$. Below are several sets we found
\begin{align*}
    &\{12,27,36,96,108,144,162,243,648,864,1944\},\\
    &\{8,18,32,36,48,216,324,432,486,864,1944\},\\
    &\{4,9,12,32,36,48,54,81,216,288,648\},\\
    &\{1,6,8,9,24,72,108,288,324,432,2592\},\\
    &\{3,18,24,27,72,108,324,864,972,1296,7776\}.
\end{align*}
Surprisingly, in 2007, Martin and O'Bryant \cite{MO} proved that as $n\rightarrow \infty$, the proportion of MSTD subsets of $\{1,2,\ldots,n\}$ is bounded below by a positive constant. Since then, research on sum-dominant sets has made considerable progress: see \cite{FP, Ma, Na2, Ru1, Ru2, Ru3} for
history and overview, \cite{He,MOS,MS,Na3,Zh1} for explicit constructions, \cite{CLMS2, MO, Zh3} for positive lower bound for the percentage of
sum-dominant sets, and \cite{CI1,CLMS1,MV,Zh2} for extensions to other settings. However, much less is known about MPTQ sets. Fortunately, many results on MSTD sets hold for MPTQ sets with the use of the log transformation and exponentiation of sets. The goal of this paper is to provide an understanding of MPTQ sets through both what we know about MSTD sets and unique properties of MPTQ sets themselves. Furthermore, properties of MPTQ sets also shed light on new results about MSTD sets. We focus on the four topics: \textit{how to search for MPTQ subsets of $\{1,2,\ldots,n\}$ more efficiently}, \textit{the probability measure of MPTQ subsets of $\{1,2,\ldots,n\}$}, \textit{when sets are not MPTQ}, and \textit{what sequences do not contain MPTQ subsets}. 

\subsection{Notation}
We first introduce some notation. 
\begin{enumerate}
    \item For $n\in\mathbb{N}$ and $r\in \mathbb{R}\backslash\{0,\pm 1\}$, define $G_{n,r} = \{1,r,r^2,\ldots,r^{n-1}\}$.
    \item For $(a_i)_{i=1}^\ell$ and a set $A$, we write $(a_i)_{i=1}^\ell\rightarrow A$ to mean the introduction of $\ell$ numbers $(a_i)_{i=1}^\ell$ into the set $A$ to form $A\cup\{a_i \mid  1\le i\le \ell\}$. (We assume that $a_i\notin A$ for all $1\le i\le \ell$.)
    \item Given a set $A$ of positive real numbers and $1\neq r>0$, define 
    $$\log_r A \ =\ \{\log_r a_i\,|\,a_i\in A\}.$$
    Because $A$ contains only positive numbers, $\log_r A$ is well-defined and $|\log_r A| = |A|$. We call this the \textit{$r$-log transformation} of $A$.
   \item  Given a set $B$ of real numbers and $1\neq r > 0$, define
    $$r^B \ =\ \{r^{b_i}\,|\, b_i\in B\}.$$
    Because $1\neq r>0$, $|r^B| = |B|$. We call this the \textit{$r$-exponential transformation} of $B$.
    \item Let $A = \{a_1,a_2,\ldots,a_n\}$, where $|a_1| \le |a_2| \le \cdots \le |a_n|$. We write $A$ in the following form 
$$A \ =\ (a_1\,|\,a_2/a_1, a_3/a_2,\ldots, a_n/a_{n-1}).$$ All information about set $A$ is preserved in this notation. Call $$a_2/a_1,a_3/a_2,\ldots,a_n/a_{n-1}$$ \textit{a multiplier sequence}. Note that the absolute value of each quotient in a multiplier sequence is at least $1$ and one set may have more than one multiplier sequence as shown in Example \ref{>1sq}.

\begin{exa} \label{>1sq} \normalfont
Let $A = \{5,1280,-10,-40,40,2560,160,320\}$. We can write $$A \ =\ (5\,|\,-2,4,-1,4,2,4,2)$$ or
$$A \ =\ (5\,|\,-2,-4,-1,-4,2,4,2).$$
\end{exa}
\end{enumerate}
\subsection{Main results}
\begin{thm}\label{esearch}
Let $n\in \mathbb{N}_{\ge 4}$. If we want to find all MPTQ subsets of $\{1,2,\ldots,2n\}$, we need to check at most $2^{2n-t}$ subsets, where $t$ is the number of primes strictly between $n$ and $2n$. 
\end{thm}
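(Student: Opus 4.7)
The plan is to show that every prime $p$ with $n<p<2n$ is redundant for the search, in the sense that the MPTQ-or-not status of an arbitrary subset $A\subseteq\{1,\ldots,2n\}$ depends only on the ``core'' $B = A\setminus P$ (where $P$ denotes the set of $t$ primes in $(n,2n)$) and on $k = |A\cap P|$, via explicit formulas. Consequently it suffices to check the $2^{2n-t}$ subsets of $\{1,\ldots,2n\}\setminus P$. The single structural input is the observation that for any $p\in P$ we have $2p>2n$, so $p$ is the \emph{only} multiple of $p$ in $\{1,\ldots,2n\}$; in particular no element of any admissible core $B$ is divisible by any $p\in P$.

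First I would write $S = A\cap P = \{p_1,\ldots,p_k\}$ and split $A\cdot A$ into the three blocks $B\cdot B$, $\{b\,p_i : b\in B,\, i\le k\}$, and $S\cdot S$. Using the key fact and unique factorization in $\mathbb{Z}$, I would show these three blocks are pairwise disjoint and each is enumerated without repetition. For instance, an equality $b_1b_2 = b\,p_i$ would force $p_i$ to divide some $b_j\in B$, contradicting the key fact; equalities involving $S\cdot S$ are handled the same way. This gives
$$|A\cdot A| \;=\; |B\cdot B| + k|B| + \binom{k+1}{2}.$$

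Next I would repeat the analysis for $A/A$, which decomposes into the four blocks $B/B$, $\{b/p_i\}$, $\{p_i/b\}$, and $S/S$. The same divisibility trick rules out every potential coincidence: for example, $b/p_i = p_j/p_\ell$ rearranges to $b\,p_\ell = p_i p_j$, forcing $p_i\mid b$ or reducing to $b\in\{p_j,p_\ell\}$, both impossible. Hence the four blocks are pairwise disjoint \emph{except} that $B/B\cap S/S = \{1\}$ whenever both $B$ and $S$ are nonempty. Combined with the direct count $|S/S| = k(k-1)+1$, this yields
$$|A/A| \;=\; |B/B| + 2k|B| + k^2-k$$
whenever $B\neq\emptyset$; when $B=\emptyset$, a direct computation of $|S\cdot S|-|S/S|$ shows $A=S$ is never MPTQ, so nothing is lost.

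The theorem now follows algorithmically: enumerate the $2^{2n-t}$ subsets $B$ of $\{1,\ldots,2n\}\setminus P$, record $(|B|,|B\cdot B|,|B/B|)$ for each, and for every $B$ use the two displayed formulas to determine, as a function of $k\in\{0,1,\ldots,t\}$, whether $B\cup S$ is MPTQ for any (equivalently every) $S\subseteq P$ with $|S|=k$. I expect the main obstacle to be the disjointness case analysis for $A/A$, where each of the six pairwise intersections of the four blocks must be ruled out by the same pattern---clearing denominators and invoking unique factorization together with the ``no other multiples of $p$'' property---while being careful to track the lone shared element $1$. Everything else reduces to bookkeeping.
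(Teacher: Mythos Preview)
Your proposal is correct and follows essentially the same approach as the paper: both exploit the fact that each prime $p\in(n,2n)$ has no other multiple in $\{1,\ldots,2n\}$, then use unique factorization to obtain explicit formulas for $|A\cdot A|$ and $|A/A|$ in terms of the core $B=A\setminus P$ and $k=|A\cap P|$. The only cosmetic difference is that the paper derives these formulas incrementally (adding one prime at a time, yielding $|A|+1$ new products and $2|A|$ new quotients per step) and packages the resulting threshold on $k$ as a ``$k$-special MPTQ'' classification, whereas you compute the block decomposition in one shot; the content is identical.
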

With a simple program, the author found no MPTQ subsets of $\{1,2,\ldots,36\}$, and the program reported a memory error when we attempted $\{1,2,\ldots,38\}$. Recall that $\{1,2,\ldots, 15\}$ already contains several MSTD sets, so MPTQ subsets appear much later than MSTD sets. Along with \cite[Theorem 1]{MO}, the following theorem shows that MPTQ sets are rare compared to MSTD sets. 
\begin{thm}\label{goto0}
As $n\rightarrow \infty$, the proportion of MPTQ subsets of $\{1,2,\ldots,n\}$ approaches $0$; that is, as $n\rightarrow \infty$, almost all subsets of $\{1,2,\ldots,n\}$ are not MPTQ. 
\end{thm}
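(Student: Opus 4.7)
The plan is to derive Theorem~\ref{goto0} quickly from Theorem~\ref{esearch} together with the Prime Number Theorem.

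The bound $2^{2n-t}$ in Theorem~\ref{esearch} expresses the structural statement that no MPTQ subset of $\{1,2,\ldots,2n\}$ contains any prime in $(n,2n)$. The same counting argument --- using only that a prime $p$ with $N/2<p\le N$ has no other multiple in $\{1,2,\ldots,N\}$ --- gives the analogous statement for every $N$: no MPTQ subset of $\{1,2,\ldots,N\}$ contains any prime in $(N/2,N]$. The algebraic heart is the identity
\[
|A\cdot A|-|A/A|\ =\ \bigl(|B\cdot B|-|B/B|\bigr)-(|B|-1),
\]
valid for $A=B\cup\{p\}$ with $p\in(N/2,N]$ prime and $p\notin B$, obtained by enumerating the new products $\{pb:b\in B\}\cup\{p^2\}$ and new quotients $\{p/b,\ b/p:b\in B\}$ adjoined to $B\cdot B$ and $B/B$ when $p$ is introduced. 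Hence an MPTQ $A$ would require $|B\cdot B|-|B/B|\ge|B|$, which is ruled out in the same manner as in Theorem~\ref{esearch}.

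Granting this structural statement, let $\pi_N=\pi(N)-\pi(N/2)$. Every MPTQ subset of $\{1,2,\ldots,N\}$ is contained in $\{1,2,\ldots,N\}\setminus P_N$, where $P_N$ is the set of primes in $(N/2,N]$; there are $2^{N-\pi_N}$ such subsets, so the proportion of MPTQ subsets is at most $2^{-\pi_N}$. By the Prime Number Theorem (Chebyshev's elementary bound already suffices), $\pi_N\to\infty$ as $N\to\infty$, and therefore $2^{-\pi_N}\to 0$.

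The main (and really only) obstacle is the structural step, namely ruling out the possibility $|B\cdot B|-|B/B|\ge|B|$ for $B\subseteq\{1,2,\ldots,N\}\setminus P_N$. This is the content of Theorem~\ref{esearch} for the prime interval in question, and once it is available the Prime Number Theorem does the rest; if a separate verification is preferred, it can be produced by a short induction on $|A|$ with the base case supplied by the lower bound $|A|\ge 8$ for positive MPTQ sets established later in the paper.
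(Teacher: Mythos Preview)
Your proposal rests on the structural claim that no MPTQ subset of $\{1,\ldots,N\}$ can contain a prime $p\in(N/2,N]$, and you attribute this to Theorem~\ref{esearch}. That is a misreading: Theorem~\ref{esearch} bounds the number of subsets one must \emph{check}, not the number of MPTQ subsets. Its proof shows that removing such a prime from an MPTQ set leaves an MPTQ set, and then explains how to \emph{recover} MPTQ sets containing such primes from the $k$-special MPTQ sets that remain. In particular it does not assert, and it is not true, that an MPTQ subset of $\{1,\ldots,N\}$ must avoid all primes in $(N/2,N]$.

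Here is a concrete counterexample to your structural claim. Starting from Conway's set $C=\{0,2,3,4,7,11,12,14\}$ with $|C+C|=26$, $|C-C|=25$, the base-expansion set $C_{3,m}$ (for $m$ large) has $|C_{3,m}+C_{3,m}|-|C_{3,m}-C_{3,m}|=26^{3}-25^{3}=1951>512=|C_{3,m}|$. Exponentiating, $S=2^{\,C_{3,m}}$ is an MPTQ subset of $\{1,\ldots,2^{M}\}$ (with $M=\max C_{3,m}$) satisfying $|S\cdot S|-|S/S|>|S|$. Take $N=2^{M+1}$ and any prime $p\in(N/2,N]$; your own identity gives
\[
|\,(S\cup\{p\})\cdot(S\cup\{p\})\,|-|\,(S\cup\{p\})/(S\cup\{p\})\,|=\bigl(|S\cdot S|-|S/S|\bigr)-(|S|-1)>0,
\]
so $S\cup\{p\}\subseteq\{1,\ldots,N\}$ is MPTQ and contains $p\in(N/2,N]$. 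Thus the inequality $|B\cdot B|-|B/B|\ge|B|$ that you need to ``rule out'' is not ruled out, neither by Theorem~\ref{esearch} nor by the induction you sketch (knowing that smaller sets are not MPTQ says nothing about how large $|B\cdot B|-|B/B|$ can be). The entire argument collapses at this step.

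The paper's own proof takes a completely different route: it combines Erd\H{o}s's multiplication-table bound $|\{1,\ldots,n\}\cdot\{1,\ldots,n\}|=o(n^{2})$ with the Cilleruelo--Guijarro-Ord\'o\~nez result that almost every $A\subseteq\{1,\ldots,n\}$ has $|A/A|\sim Cn^{2}$, so for almost every subset $|A\cdot A|<|A/A|$.
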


Our next result concerns the smallest cardinality of MPTQ sets, comparably to \cite[Theorem 1]{He} by Hegarty.
\begin{thm}\label{smallestcar}
Let $A$ be a MPTQ set of real numbers. The following claims are true.
\begin{enumerate}
    \item If $A$ contains only positive numbers, then $|A|\ge 8$. 
    \item If $A$ contains negative numbers, then $|A|\ge 5$.
\end{enumerate}  
\end{thm}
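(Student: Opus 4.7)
My plan for Theorem~\ref{smallestcar} splits naturally into its two parts, handled by rather different techniques.

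For (1), the plan is to transport the problem to the additive setting via the $r$-log transformation. Setting $B = \log_r A$ for any $r > 1$, we have $|B| = |A|$, and because $\log$ converts products to sums and quotients to differences, $|A \cdot A| = |B + B|$ and $|A/A| = |B - B|$. Thus $A$ is MPTQ if and only if $B \subset \R$ is MSTD. Hegarty's theorem \cite[Theorem~1]{He} shows every MSTD subset of $\Z$ has at least $8$ elements; I would extend this bound to real MSTD sets by a rational perturbation argument. The collision relations defining $B \pm B$ are $\Q$-linear in the coordinates of $B$, so one can perturb $B$ into $\Q^{|B|}$ preserving the same incidence pattern (hence the MSTD property), and then clear denominators to obtain an integer MSTD set of the same cardinality. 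This forces $|A| = |B| \ge 8$.

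For (2), if $A$ consists only of negative numbers then $-A$ is a positive MPTQ set of the same size, and (1) already yields $|A| \ge 8 \ge 5$. So I may assume $A$ contains both signs. Let $B = A \cap \R_{>0}$ and $C = \{|a| : a \in A,\ a < 0\}$, positive sets with $p := |B| \ge 1$, $n := |C| \ge 1$, $p + n = |A|$. Partitioning $A \cdot A$ and $A/A$ by sign (legitimate since the positive and negative reals are disjoint) gives
\begin{align*}
|A \cdot A| &= |BB \cup CC| + |BC|, \\
|A/A| &= |B/B \cup C/C| + |B/C \cup C/B|.
\end{align*}
Assuming $|A| \le 4$ toward a contradiction, the plan is to dispatch each possibility $(p, n) \in \{(1,1), (1,2), (2,1), (1,3), (3,1), (2,2)\}$ by showing $|A \cdot A| \le |A/A|$.

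The case analysis rests on two small-size inequalities, both proved by passing through the log transformation: (i) for a positive set $X$ with $|X| \le 3$, $|XX| \le |X/X|$ (equivalently $|\tilde X + \tilde X| \le |\tilde X - \tilde X|$, a direct finite check for $2$- and $3$-element real sets); and (ii) for positive sets $X, Y$ with $|X| + |Y| \le 4$, $|XY| \le |X/Y \cup Y/X|$ (equivalently $|\tilde X + \tilde Y| \le |(\tilde X - \tilde Y) \cup (\tilde Y - \tilde X)|$, again a finite check). Combined with the observation that $1 \in B/B \cap C/C$ always, and that each multiplicative coincidence such as $c^2 = b_i b_j$ creates a matched pair of collisions simultaneously in $BB \cup CC$ and in $B/C \cup C/B$, the cases $(1,1), (2,1), (1,2), (3,1), (1,3)$ are quickly dispatched by these bounds.

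The main obstacle is the $(2, 2)$ case, where several sub-scenarios bring the two sides of the inequality nearly tight. Writing $\tilde B = \{x_1, x_2\}$ and $\tilde C = \{y_1, y_2\}$ in log coordinates, the generic count produces $|A \cdot A| = 10$ and $|A/A| = 13$, but many additive relations among $x_1, x_2, y_1, y_2$ shrink both sides simultaneously. The tight sub-scenarios to examine include (a) $x_2 - x_1 = y_2 - y_1$ (so $\tilde C$ is a translate of $\tilde B$), (b) $x_1 + x_2 = y_1 + y_2$ (forcing $\tilde B - \tilde C$ to be symmetric under negation, collapsing $|B/C \cup C/B|$ substantially), (c) the reflection $\tilde C = \mu - \tilde B$ for some $\mu$, and (d) midpoint relations such as $2x_1 = y_1 + y_2$. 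In each sub-scenario I would enumerate $|BB \cup CC|$, $|BC|$, $|B/B \cup C/C|$, $|B/C \cup C/B|$ exactly and verify the inequality; the subtlety is that every such relation creates collisions on \emph{both} sides, so the comparison demands precise counting rather than loose bounds. Once all sub-scenarios are checked, (2) follows.
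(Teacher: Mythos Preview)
Your approach to part (1) is essentially the paper's: pass to the additive world via $\log_r$ and invoke the known lower bound $|B|\ge 8$ for MSTD sets. The paper simply cites Nathanson's Freiman-isomorphism paper \cite[Theorem~6]{Na1} for this bound; your explicit perturbation argument (rationalise the linear collision relations, then clear denominators) is a correct and in fact more careful justification of the real case, and is exactly the Freiman-isomorphism idea underlying that citation.

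Your approach to part (2) is genuinely different. The paper does not split $A$ by sign. Instead, after dispatching $|A|\le 3$ via the auxiliary result that $G_{n,r}\cup\{a\}$ is never MPTQ (Theorem~\ref{1joingeo} and Corollary~\ref{geoprog}), it treats $|A|=4$ using the multiplier-sequence notation $A=(a_1\mid m_1,m_2,m_3)$. A counting argument (Remark~\ref{deepex}) forces $|A/A|\le 7$, and then a short case split on whether $m_1=-1$ shows either that $\{a_1,a_2,a_3\}$ is geometric (done by Corollary~\ref{geoprog}) or that $A$ is symmetric (hence balanced). Your sign-partition method, with the decomposition $|A\cdot A|=|BB\cup CC|+|BC|$ and $|A/A|=|B/B\cup C/C|+|B/C\cup C/B|$, is a valid alternative and your two small-size inequalities (i) and (ii) are correct and easy to verify in log coordinates. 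The trade-off is that the paper's route reuses structural lemmas already developed in the article and finishes the $|A|=4$ case in a handful of lines, whereas your $(2,2)$ sub-case still requires a careful enumeration of several additive coincidence patterns among four log-coordinates; this is doable but noticeably messier than the multiplier-sequence argument, and your proposal leaves it at the level of a to-do list rather than a completed check.
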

When we allow negative numbers to be included, the proof for the smallest cardinality becomes more complicated very quickly. 
\begin{que}\label{quesmallest}\normalfont
What is the smallest cardinality among MPTQ sets of real numbers? 
\end{que}
To prove \cite[Theorem 1]{He}, Hegarty (2007) used a nontrivial algorithm to reduce the problem to finite computation. The mathematica program was reported to run for about 15 hours. However, because it takes less memory and computation power for computers to do addition and subtraction than multiplication and division, Question \ref{quesmallest} is thus quite challenging. 

Lastly, we find sequences that do not contain MPTQ subsets. 
\begin{thm}\label{logprimenoMSTD} Let $P$ be the set of all primes. The following are true.  
\begin{enumerate}
\item The set $P$ contains no MPTQ subsets. 
\item Fix $1\neq r > 0$. Consider $P_r = \log_r(P)$. Then $P_r$ contains no MSTD subsets. 
\end{enumerate}
\end{thm}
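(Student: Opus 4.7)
The plan is to prove (1) by a direct count based on unique factorization, then derive (2) immediately via the $r$-exponential transformation defined in the notation section.

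For part (1), I would fix a finite $A = \{p_1, \ldots, p_n\} \subseteq P$ with distinct primes $p_i$ and count $|A \cdot A|$ and $|A/A|$ separately. By unique factorization, $p_i p_j = p_k p_\ell$ iff $\{p_i, p_j\} = \{p_k, p_\ell\}$ as multisets, so the products are in bijection with size-$2$ multisets drawn from $A$, giving $|A \cdot A| = \binom{n+1}{2}$. For the quotients, $p_i/p_j = p_k/p_\ell$ is equivalent to $p_i p_\ell = p_j p_k$; the cases $i=j$ or $k=\ell$ both collapse to the single value $1$, while for $i \ne j$ and $k \ne \ell$, unique factorization together with $p_i \ne p_j$ forces $(i,j)=(k,\ell)$. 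Hence $|A/A| = n(n-1) + 1$. A short arithmetic check then shows $\binom{n+1}{2} \le n(n-1)+1 \iff (n-1)(n-2) \ge 0$, which holds for every $n \in \mathbb{N}$, so $A$ is never MPTQ.

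For part (2), the identities $r^{x+y} = r^x r^y$ and $r^{x-y} = r^x/r^y$, together with the injectivity of $x \mapsto r^x$ (since $1 \ne r > 0$), yield $|B+B| = |r^B \cdot r^B|$ and $|B-B| = |r^B/r^B|$ for every finite $B \subseteq \mathbb{R}$. Given a finite $A \subseteq P_r$, the set $r^A$ is a finite subset of $P$, so part (1) applied to $r^A$ gives $|r^A \cdot r^A| \le |r^A/r^A|$ and therefore $|A+A| \le |A-A|$; thus $A$ is not MSTD.

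Because every step reduces to unique factorization plus elementary counting combined with the standard log/exp dictionary between $(+,-)$ and $(\cdot,/)$, there is essentially no obstacle here. The one point that requires a moment of care is ruling out nontrivial collisions in $A/A$ beyond those equal to $1$, but this is immediate from the $p_i$ being distinct primes, so unique factorization handles it cleanly.
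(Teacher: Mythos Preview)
Your proof is correct, and in fact cleaner for part (1) than the paper's. The paper argues inductively: it assumes $A\setminus\{a_n\}$ is not MPTQ and shows that adjoining the largest prime $a_n$ contributes at most $n$ new products but exactly $2(n-1)$ new quotients (the $a_n/a_j$ and their reciprocals, all distinct by unique factorization), so the non-MPTQ property persists once $n\ge 8$; the base case $n\le 7$ is handled by invoking Theorem~\ref{smallestcar}(1). You instead compute $|A\cdot A|=\binom{n+1}{2}$ and $|A/A|=n(n-1)+1$ outright and verify the inequality $\binom{n+1}{2}\le n(n-1)+1$ for all $n$, which is self-contained and avoids any appeal to the minimum-cardinality result. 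Your approach is shorter and more elementary; the paper's incremental argument, on the other hand, fits the template used elsewhere in the paper (e.g., Theorem~\ref{thm:gen}) and illustrates the ``new products versus new quotients'' bookkeeping that drives several of its results. For part (2) both you and the paper use the same exponential-transformation idea (Lemma~\ref{expo}).
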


\begin{thm}\label{thm:gen} Let $A = \{a_k\}_{k=1}^\infty$ be an increasing sequence in absolute value of real numbers. If there exists a positive integer $r$ such that
\begin{enumerate}
	\item $|a_k| > |a_{k-1}\cdot a_{k-r}|$ for all $k\ge r+1$, and
	\item $A$ does not contain any MPTQ set $S$ with $|S| \le 2r-1$,
\end{enumerate}
then $A$ contains no MPTQ set.
\end{thm}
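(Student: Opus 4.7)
I would prove Theorem~\ref{thm:gen} by strong induction on $|S|$ for $S\subseteq A$, showing that no such $S$ is MPTQ. The base case $|S|\le 2r-1$ is immediate from hypothesis~(2). For the inductive step, fix $n\ge 2r$, assume that every subset of $A$ with fewer than $n$ elements is not MPTQ, take $S\subseteq A$ with $|S|=n$, and order $S=\{s_1,\dots,s_n\}$ so that $|s_1|<\cdots<|s_n|$ and $s_i=a_{k_i}$ with $k_1<\cdots<k_n$. Set $S'=S\setminus\{s_n\}$; the induction hypothesis gives $|S'\cdot S'|\le |S'/S'|$, and the goal is $|S\cdot S|\le |S/S|$.

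The first step transfers hypothesis~(1) from $A$ to the subset $S$: because $k_n-1\ge k_{n-1}$ and $k_n-r\ge k_{n-r}$ (the latter by strict monotonicity of the $k_i$), and because $|a_k|$ is increasing, applying (1) at $k_n$ gives
\[
|s_n|\ >\ |s_{n-1}|\cdot |s_{n-r}|.
\]
Now I count. Every element of $(S\cdot S)\setminus (S'\cdot S')$ has the form $s_n s_i$, so there are at most $n$ new products. For new quotients I claim that for each $i\in\{1,\dots,n-r\}$, both $s_n/s_i$ and $s_i/s_n$ lie outside $S'/S'$, producing at least $2(n-r)$ new quotients. Combined with $n\ge 2r$ (hence $n\le 2(n-r)$), this would yield
\[
|S\cdot S|-|S/S|\ \le\ (|S'\cdot S'|-|S'/S'|)+(n-2(n-r))\ \le\ 0+(2r-n)\ \le\ 0,
\]
completing the induction.

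The technical core is the new-quotient claim. Suppose, toward contradiction, that $s_n/s_i=s_p/s_q$ for some $p,q\in\{1,\dots,n-1\}$ and some $i\le n-r$. Rearranging gives $|s_n|\,|s_q|=|s_i|\,|s_p|\le |s_{n-r}|\,|s_{n-1}|$, which combined with the transferred growth inequality forces $|s_q|<1$. Under the normalization $|a_k|\ge 1$ (satisfied by all the sequences to which this theorem is applied), this is the desired contradiction, and the reciprocal case for $s_i/s_n$ follows from the same identity by inversion. The main obstacle I anticipate is carrying the argument through when $|a_k|<1$ for some small indices: the bound $|s_q|<1$ is then no longer automatically absurd, and one must either apply hypothesis~(1) recursively at $k_q$ to descend to a forbidden configuration, or first reduce to the case where the minimum element of the candidate MPTQ set has absolute value at least one.
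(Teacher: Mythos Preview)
Your approach matches the paper's: strong induction (the paper inducts on the index in $A$ of the largest element rather than on $|S|$, but this is cosmetic), and for $|S|=n\ge 2r$ the count that adjoining $s_n$ contributes at most $n$ new products and at least $n$ new quotients. The only real difference is how the new quotients are certified. The paper sets $t=\lfloor (n+1)/2\rfloor\le n-r$ and argues directly that
\[
\left|\frac{s_n}{s_t}\right|\ \ge\ \left|\frac{s_n}{s_{n-r}}\right|\ \ge\ \left|\frac{a_{k_n}}{a_{k_n-r}}\right|\ >\ |a_{k_n-1}|\ \ge\ \frac{|a_{k_n-1}|}{|a_1|}\ \ge\ \left|\frac{s_{n-1}}{s_1}\right|,
\]
so that $s_n/s_1,\dots,s_n/s_t$ and their reciprocals give $2t\ge n$ quotients whose absolute values lie strictly outside the range of $|S'/S'|$; hence all are new. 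Your contradiction argument (for the wider index range $i\le n-r$) reaches the same conclusion by the same transferred inequality $|s_n|>|s_{n-1}|\,|s_{n-r}|$.

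Your flagged obstacle is genuine, and the paper's proof does not avoid it: in the chain above, the step $|a_{k_n-1}|\ge |a_{k_n-1}|/|a_1|$ silently uses $|a_1|\ge 1$. So the paper's argument, like yours, is complete only under that normalization; neither handles the case $|a_1|<1$.
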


Theorem \ref{thm:gen} is comparable to \cite[Theorem 1]{CI1} by H. V. Chu et al. but allows more flexibility in the sense that our sequence needs only to be increasing in absolute value. 

\begin{exa}\label{mulFi}\normalfont
Define the Fibonacci sequence to be $F_1 = 1$, $F_2 = 2$, and $F_{n} = F_{n-1} + F_{n-2}$ for $n\ge 3$. 
Let $A = \{a_k\}_{k=1}^\infty$ with $a_k = 2^{F_k}$. Because for $k\ge 4$, $a_k = a_{k-1}a_{k-2} > a_{k-1}a_{k-3}$, and there are no MPTQ sets of size $5$ due to Theorem \ref{smallestcar} item 1, $A$ has no MPTQ subsets. 
\end{exa}

\begin{exa}\label{mulFine}\normalfont
Let $A = \{a_k\}_{k=1}^\infty$ with $a_k = \pm k^{F_k}$ (we may choose the sign for each $a_k$ arbitrarily). Because for $k\ge 3$, $$|a_k|\ =\ k^{F_k} \ =\ k^{F_{k-1}}\cdot k^{F_{k-2}}\ >\ (k-1)^{F_{k-1}}\cdot (k-2)^{F_{k-2}} \ =\ |a_{k-1}a_{k-2}|,$$ and there are no MPTQ sets of size $3$ due to Theorem \ref{smallestcar} item 2, $A$ has no MPTQ subsets. 
\end{exa}

\begin{rek}\normalfont
It is interesting to see that while the set of prime numbers contains infinitely many MSTD subsets \cite[Theorem 5]{CI1}, it contains no MPTQ subsets. On the other hand, an example of a set containing infinitely many MPTQ subsets while no MSTD subsets is $\{1,2,2^2,2^3,\ldots\}$.\footnote{The reason that $\{1,2,2^2,2^3,\ldots\}$ has no MSTD subsets is due to \cite[Corollary 8]{CI1}.} Finally, we also have sets that contain neither MSTD nor MPTQ subsets. An example is the sequence in Example \ref{mulFi}.
\end{rek}

\section{Search for MPTQ subsets more efficiently and\\ Probability measure for MPTQ subsets}
\begin{defi}\normalfont
For every MPTQ set $A$, let $k$ be the largest positive integer (if any) such that 
$$|A\cdot A|-|A/A| \ge k|A|+\frac{k(k-3)}{2}+1.$$
Then $A$ is said to be $k$-special MPTQ.
\end{defi}
\begin{proof}[Proof of Theorem \ref{esearch}]
Fix $n\ge 4$. Let $t$ be the number of primes strictly between $n$ and $2n$. By Bertrand's postulate, we know that $t\ge 1$. Let $p$ be such a prime and $A$ be a subset of $\{1,2,\ldots,2n\}$ not containing $p$. We claim that $p\rightarrow A$ gives $|A|+1$ new products and $2|A|$ new quotients. We proceed by proving the claim. 

Write $A = \{a_1,a_2,\ldots, a_j\}$, where $a_1<a_2<\cdots<a_j$. Consider the following products
$$pa_1, pa_2, \ldots, pa_j, p^2.$$
They are all new products from $p\rightarrow A$. Indeed, suppose that there exists $1\le k,\ell,m\le j$ such that either 
$a_ka_\ell = pa_m$ or $a_ka_\ell = p^2$. In both cases, either $p|a_k$ or $p|a_\ell$, which contradicts that $n<p<2n$. So, the number of new products is exactly $j+1 = |A| + 1$. 
Consider the following quotients 
$$\frac{p}{a_1}, \frac{p}{a_2},\ldots, \frac{p}{a_j}.$$
They are all new quotients from $p\rightarrow A$. Indeed, suppose that there exists $1\le k,\ell, m\le j$ such that $\frac{p}{a_k} = \frac{a_\ell}{a_m}$. Then $pa_m = a_ka_\ell$ and so, either $p|a_k$ or $p|a_\ell$. Hence, $$\max \{a_\ell, a_k\}\ \ge\ 2p \ >\ 2n,$$ 
which contradicts that $A\subseteq \{1,2,\ldots, 2n\}$. Therefore, all the above quotients and their reciprocals are new. So, the number of new quotients is exactly $2j = 2|A|$. 

We have proved that $p\rightarrow A$ gives $|A|+1$ new products and $2|A|$ new quotients. For any $|A|\ge 1$, $2|A| \ge |A|+1$. So, given a MPTQ set containing some primes strictly between $n$ and $2n$, we know that by excluding these primes from the set, we still have a MPTQ set. 

Let $S$ be a MPTQ subset of $\{1,2,\ldots, n\}$ and let $k$ be the maximum number of primes strictly between $n$ and $2n$ that can be added to $S$ and we still have a MPTQ set $S'$. Applying our above claim repeatedly, we have
\begin{align*}|S'\cdot S'| &\ =\ |S\cdot S|+\sum_{i=1}^{k} (|S|+i)\\
|S'/S'|&\ =\ |S/S| + \sum_{i=0}^{k-1}2(|S|+i).\end{align*}
Because $|S'\cdot S'| - |S'/S'|\ge 1$, we have 
$$|S\cdot S| - |S/S|\ \ge\ k|S| + \frac{k(k-3)}{2} + 1.$$
So, $S$ is $k$-special MPTQ. 

We now outline the steps in finding all MPTQ subsets of $\{1,2,\ldots, n\}$. 
\begin{itemize}
    \item[(1)] Search for all MPTQ subsets of $\{1,2,\ldots, n\}$ without primes strictly between $n$ and $2n$. 
    \item[(2)] For each MPTQ subset $S$, find the largest positive integer $k$ such that $$|S\cdot S| - |S/S|\ \ge\ k|S| + \frac{k(k-3)}{2} + 1;$$
    in other words, classify all MPTQ subsets found in step (1) by their $k$-special MPTQ property. This can be done since from step (1), we already know $|S\cdot S|$, $|S/S|$ and $|S|$ of each MPTQ set $S$.  
    \item [(3)] Given a $k$-special MPTQ set, we can add at most $k$ primes strictly between $n$ and $2n$ to it and still have a MPTQ set. 
\end{itemize}
Following these steps, we will have all MPTQ subsets of $\{1,2,\ldots, n\}$. Therefore, the number of subsets we need to check is reduced by a factor of $2$ for each $p$. Because there are $t$ primes strictly between $n$ and $2n$, this
method helps reduce the number of subsets to be checked by a factor of $2^t$. \footnote{There are many improved versions of Bertrand's postulate, which may reduce the number of subsets to be checked further as our $n$ grows. For example, Nagura \cite{Nagura} proved that for $n\ge 25$, there is always a prime between $n$ and $6n/5$. Therefore, between $n$ and $2n$, there are at least $2$ primes. This reduces the number of subsets to be checked by a factor of $4$. } 
\end{proof}
\begin{exa}\normalfont
If we want to find all MPTQ subsets of $\{1,2,\ldots,36\}$, we can instead find all MPTQ subsets of $\{1,2,\ldots,36\}\backslash \{19,23,29,31\}$.
\end{exa}

\begin{proof}[Proof of Theorem \ref{goto0}]
Due to \cite[Corollary 1.1]{sa} by Cilleruelo and Guijarro-Ordonez, almost all sets $A\subseteq \{1,2,\ldots,n\}$ have $|A/A| \sim Cn^2$ for some constant $C>0$. On the other hand, Erd\H{o}s \cite{Er} proved that as $n\rightarrow \infty$, $$|A\cdot A| \ \le\ |\{1,2,\ldots,n\}\cdot \{1,2,\ldots, n\}| \ =\ \frac{n^2}{(\log n)^{\delta+o(1)}} \ =\ o(n^2),$$
where $\delta = 1- \frac{1+\log\log 2}{\log 2}$. Therefore, as $n\rightarrow \infty$, almost all subsets of $\{1,2,\ldots,n\}$ are not MPTQ. 
\end{proof}

\section{Preliminaries}
We now mention some important properties of MPTQ sets and the relationship between MSTD and MPTQ sets. 
\begin{defi}\normalfont
A set $A$ is symmetric with respect to $a$ if there exists $a\in \mathbb{R}\backslash \{0\}$ such that $a/A = A$. 
\end{defi}
\begin{exa}\normalfont
The set $S_1 = \{3,4,6,8,9,27,48,144,162,216,324,432\}$ is symmetric with respect to $1296$ because 
$$S_1 \ =\ \bigg\{\frac{1296}{3},\frac{1296}{4},\frac{1296}{6},\frac{1296}{8},\frac{1296}{9},\frac{1296}{27},\frac{1296}{48},\frac{1296}{144},\frac{1296}{162},\frac{1296}{216},\frac{1296}{324},\frac{1296}{432}\bigg\}.$$
\end{exa}
\begin{lem}
A symmetric set is balanced. 
\end{lem}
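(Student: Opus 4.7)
The plan is to exhibit an explicit bijection between $A\cdot A$ and $A/A$, which immediately gives $|A\cdot A| = |A/A|$ and hence that $A$ is balanced. Since $A$ is symmetric with respect to $a$, we have $a/A = A$; equivalently, for every $y \in A$, the element $a/y$ again lies in $A$.

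The natural candidate is the map $\varphi \colon A\cdot A \to A/A$ defined by $\varphi(z) = z/a$. First I would verify that $\varphi$ actually lands in $A/A$: given $xy \in A\cdot A$ with $x,y \in A$, write
\[
\varphi(xy) \;=\; \frac{xy}{a} \;=\; \frac{x}{\,a/y\,},
\]
and since $a/y \in A$ by the symmetry hypothesis, this quotient lies in $A/A$. Conversely, I would show surjectivity: given $x/y \in A/A$ with $x,y \in A$, set $y' = a/y \in A$. Then $x\cdot y' \in A\cdot A$ and $\varphi(x\cdot y') = xy'/a = x/y$, so $\varphi$ is onto.

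Injectivity is essentially free because $\varphi$ is just multiplication by the nonzero constant $1/a$, which is a bijection of $\mathbb{R}\setminus\{0\}$; hence if $\varphi(z_1) = \varphi(z_2)$ then $z_1 = z_2$. Combining the three properties, $\varphi$ is a bijection between $A\cdot A$ and $A/A$, so $|A\cdot A| = |A/A|$, i.e., $A$ is balanced.

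There is no real obstacle here: the argument is just a one-line consequence of the defining identity $a/A = A$, which lets us trade a factor $y \in A$ in a product for the factor $a/y \in A$ in a quotient (and vice versa). The only thing to be careful about is ensuring the map is well-defined as a function on the underlying set $A\cdot A$ rather than on labeled pairs, but this is automatic since $\varphi$ is defined by the algebraic operation $z \mapsto z/a$ and does not depend on the chosen factorization $z = xy$.
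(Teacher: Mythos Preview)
Your proof is correct and is essentially the same as the paper's: the paper writes the one-line chain $|A\cdot A| = |(a/A)\cdot A| = |a\cdot (A/A)| = |A/A|$, which encodes exactly your bijection $z \mapsto z/a$ (and its inverse $w \mapsto aw$) between $A\cdot A$ and $A/A$. You have simply unpacked that chain into an explicit verification that the map is well-defined, injective, and surjective.
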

\begin{proof}
Let $A$ be a symmetric set with respect to $a$. We have $$|A\cdot A| \ =\ |(a/A)\cdot A| \ =\ |a\cdot (A/A)| \ =\ |A/A|.$$
Therefore, $A$ is balanced. 
\end{proof}
\begin{rek}\normalfont
Let $A = \{a_1,\ldots,a_n\}$ be a MPTQ set and $A^p$ be the nonempty subset of $A$ whose elements are divisible by a prime $p$. Let $q$ be a prime that does not divide any number in $A$. For each number in $A^p$, if we replace $p$ in its prime factorization by $q$ to form $(A^p)'$. Then $(A\backslash A^p)\cup (A^p)'$ is MPTQ. The reason is that the process does not change the sizes of the product set and the quotient set. MSTD sets do not enjoy this property. We call this the \textit{$(p,q)$-prime switch} of $A$.
\end{rek}
\begin{exa}\normalfont
The set
$$S_2 \ =\ \{3,4,6,8,9,27,48,72,144,162,216,324,432\}$$
is MPTQ. By the $(2,5)$-prime switch, we have the new set
$$S_3 \ =\ \{3,25,15,125,9,27,1875,1125,5625,405,3375,2025,16875\},$$
which is also MPTQ.
\end{exa}
\begin{defi}\normalfont
Let $A\in \mathbb{R}\backslash \{0\}$. For $a_i,a_j\in A$, we have $a_i/a_i = a_j/a_j = 1$. We call the pair $(a_i, a_i)$, $(a_j,a_j)$ a trivial pair of equal quotients. 
\end{defi}
\begin{prop}\label{trivialbounds}
For a finite set $A\in\mathbb{R}\backslash\{0\}$, we have the following trivial bounds
\begin{align}
    |A\cdot A|&\ \le\ \frac{|A|(|A|+1)}{2},\label{eq1}\\
    |A/A| &\ \le\ |A|(|A|-1)+1\label{eq2}.
\end{align}
The equality in (\ref{eq1}) is achieved if every pair of numbers gives a distinct product, and the equality in (\ref{eq2}) if every pair of distinct numbers gives a distinct quotient. 
\end{prop}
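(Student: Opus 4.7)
The plan is to prove both bounds by a direct double-counting argument, exploiting the commutativity of multiplication (which forces coincidences of products) and the trivial equality $a_i/a_i = 1$ for every $i$ (which forces coincidences of quotients). Throughout, write $n = |A|$ and enumerate $A = \{a_1, \ldots, a_n\}$.

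For the first bound, the ordered product set $\{(a_i,a_j) : 1 \le i,j \le n\}$ has cardinality $n^2$, but each off-diagonal product $a_i a_j$ with $i \ne j$ is realized by the two ordered pairs $(i,j)$ and $(j,i)$. Therefore every element of $A \cdot A$ comes from at least one unordered pair $\{i,j\}$ with $i \le j$, of which there are exactly
\[
n + \binom{n}{2} \ =\ \frac{n(n+1)}{2}.
\]
This yields the inequality, and equality holds precisely when the map $\{i,j\} \mapsto a_i a_j$ (on unordered pairs with possible repetition) is injective, i.e.\ every such pair yields a distinct product.

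For the second bound, the $n$ diagonal quotients $a_i/a_i$ are all equal to $1$, so they contribute a single element to $A/A$. The remaining $n(n-1)$ ordered pairs $(i,j)$ with $i \ne j$ contribute at most $n(n-1)$ further elements (note that $a_i/a_j \ne 1$ when $i \ne j$, so these cannot collide with the diagonal contribution). Hence
\[
|A/A| \ \le\ 1 + n(n-1),
\]
with equality iff the off-diagonal map $(i,j) \mapsto a_i/a_j$ is injective on ordered pairs of distinct indices, i.e.\ every ordered pair of distinct elements yields a distinct quotient.

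There is no real obstacle here: the proposition is a pure counting observation, as the authors emphasize by calling these the \emph{trivial bounds}. The only subtlety worth flagging in the write-up is to explicitly note, in the quotient case, that off-diagonal quotients cannot equal $1$ (so the diagonal collapse contributes exactly one element, not zero) and that reciprocal pairs $a_i/a_j$ and $a_j/a_i$ are counted separately because they are generally distinct. Once these points are recorded, both inequalities and their equality conditions drop out immediately.
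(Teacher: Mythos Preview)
Your argument is correct; the paper itself gives no proof of this proposition (it is stated as a self-evident counting fact and called the ``trivial bounds''), so your direct double-counting is exactly the intended justification. The only minor remark is that your parenthetical about reciprocal pairs being ``generally distinct'' could be sharpened: $a_i/a_j = a_j/a_i$ can occur when $a_i = -a_j$, but this does not affect the inequality, only the equality characterization, which you have stated correctly in terms of injectivity on ordered pairs.
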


\begin{rek} \label{countquot}\normalfont
Given a set $A\in\mathbb{R}\backslash\{0\}$, for each $q\in A/A$, define 
$$(A/A)_q \ =\ \{\{a_i,a_j\}\,|\, a_i/a_j = q\mbox{ and } a_i,a_j\in A\}.$$
Then 
\begin{align}\label{numberof=quo}\frac{1}{2}(|A|(|A|-1)+1-|A/A|) \ =\ \sum_{q\in A/A, q\neq 1, |q|\ge 1}(|(A/A)_q| - 1).\end{align}
The part $|A|(|A|-1)+1$ comes from Inequality (\ref{eq2}).
\end{rek}
We provide an example to help understand (\ref{numberof=quo}).
\begin{exa}\normalfont
Let $A = \{1,2,3,6,9\}$. We have 
$$A/A = \bigg\{1,2,3,\frac{3}{2},\frac{9}{2},6,9,\frac{1}{2},\frac{1}{3},\frac{1}{9},\frac{1}{6},\frac{2}{3},\frac{2}{9}\bigg\}$$
and so, $|A/A|=13$. The left side of (\ref{numberof=quo}) is $4$. Consider the right side of (\ref{numberof=quo}). We have
\begin{align*}
    &(A/A)_2 \ =\ \{\{2,1\}, \{6,3\}\},\\
    &(A/A)_3 \ =\ \{\{3,1\}, \{6,2\}, \{9,3\}\},(A/A)_{3/2} \ =\ \{\{3,2\}, \{9,6\}\},\\
    &(A/A)_{9/2}\ =\ \{\{9,2\}\}, (A/A)_{6} \ =\{\{6,1\}\}, (A/A)_9 \ =\ \{\{9,1\}\}.
\end{align*}
The right side is $\sum_{q\in A/A, q > 1} (|(A/A)_q|-1) = 4$, as desired. 
\end{exa}
\begin{rek} \label{countprod}\normalfont
Given a set $A\in \mathbb{R}\backslash \{0\}$, for each $p\in A\cdot A$, define
$$(A\cdot A)_p \ =\ \{\{a_i,a_j\}\,|\, a_ia_j = p\mbox{ and } a_i,a_j \in A\}.$$
Then
\begin{align}\frac{1}{2}|A|(|A|+1) - |A\cdot A| \ =\ \sum_{p\in A\cdot A}(|(A\cdot A)_p|-1).\label{numberof=pro}\end{align}
The part $\frac{1}{2}|A|(|A|+1)$ comes from Inequality (\ref{eq1}).
\end{rek}
\begin{exa}\normalfont
Let $A = \{1,2,3,6,9\}$. We have
\begin{align*}
    A\cdot A \ =\ \{1,2,3,4,6,9,12,18,27,36,54,81\}
\end{align*}
and so, $|A\cdot A| = 12$. The left side of (\ref{numberof=pro}) is $3$. Consider the right side of (\ref{numberof=pro}). We have
\begin{align*}
    &(A/A)_1 = \{\{1,1\}\},  (A/A)_2 = \{\{1,2\}\},  (A/A)_3 = \{\{3\}\},  (A/A)_4 = \{\{2,2\}\},\\
    &(A/A)_6 = \{\{1,6\}, \{2,3\}\}, (A/A)_9 = \{\{1,9\},\{3,3\}\}, (A/A)_{12} = \{\{2,6\}\},\\
    &(A/A)_{18} = \{\{2,9\},\{3,6\}\},  (A/A)_{27} = \{\{3,9\}\},(A/A)_{36} = \{\{6,6\}\},\\ 
    &(A/A)_{54} = \{\{6,9\}\},  (A/A)_{81} = \{\{9,9\}\}.
\end{align*}
So, the right side is $3$, as desired. \end{exa}
\begin{rek}\label{deepex}\normalfont
Let $A\subset \mathbb{R}\backslash \{0\}$. Loosely speaking, Remark \ref{countquot} and Remark \ref{countprod} show how pairs of equal products and nontrivial pairs of equal quotients reduce $|A\cdot A|$ and $|A/A|$, respectively.  When we look at the reduction, we have to be very careful. 
For example, if we have $a_i \cdot  a_j = a_m \cdot a_n = a_p\cdot a_q$ for some $a_i, a_j, a_m, a_n, a_p, a_q\in A$ and $a_i,a_j, a_m, a_p,a_q$ being pairwise different, $|A\cdot A|$ is reduced by $2$, not $3$ even though $\{a_i,a_j\}, \{a_m,a_n\},\{a_p,a_q\}\in (A\cdot A)_{a_ia_j}$. This is why we need to subtract $1$ from each summand in (\ref{numberof=pro}). The same reasoning applies for $A/A$. Now, we investigate the relationship between the number of nontrivial pairs of equal quotients and the number of pairs of equal products. Consider two cases. 
\begin{enumerate}
\item Case 1: we do not have $a_i \cdot  a_j = a_m \cdot a_n = a_p\cdot a_q$ for all $a_i, a_j, a_m, a_n, a_p, a_q\in A$ and $a_i, a_j, a_m, a_p, a_q$ being pairwise different. In other words, for all $p\in A\cdot A$, $1\le |(A\cdot A)_p|\le 2$. In this case, we have a very useful inequality. Let $a_i,a_j,a_m,a_n\in A$, where $a_j/a_i = a_n/a_m\neq 1$ and $|a_i|\le |a_j|\le |a_m|\le |a_n|$.
\begin{itemize}
\item If $a_j\neq a_m$, we have another nontrivial pair of equal quotients whose absolute values are at least $1$: $a_m/a_i = a_n/a_j$. 
\item If $a_j = a_m$, then we do not have another pair. 
\end{itemize}
In both cases, we have $a_j\cdot a_m = a_i\cdot a_n$, a pair of equal products. So, a nontrivial pair of equal quotients whose absolute values are at least $1$ increases the right side of (\ref{numberof=quo}) by at most $2$, while its corresponding pair of equal products increases the right side of (\ref{numberof=pro}) by exactly $1$. Hence, if $$k = \sum_{q\in A/A, q\neq 1, |q|\ge 1}(|(A/A)_q| - 1),$$ then  
\begin{align}\label{www}\sum_{p\in A\cdot A}(|(A\cdot A)_p|-1)\ \ge\ k/2.\end{align}

\item Case 2: $a_i \cdot  a_j = a_m \cdot a_n = a_p\cdot a_q$ for some $a_i, a_j, a_m, a_n, a_p, a_q\in A$ and $a_i, a_j, a_m, a_p, a_q$ being pairwise different. Then we do not have (\ref{www}) anymore. To see why, suppose that  $\{1,4,5,8,10,40\}\subseteq A$. Then the following pairs of equal quotients 
\begin{align*}
 \frac{4}{1} \ =\ \frac{40}{10}, \frac{10}{1} \ =\ \frac{40}{4}, \frac{40}{8} \ =\ \frac{5}{1},\frac{40}{5} \ =\ \frac{8}{1}, \frac{5}{4} \ =\ \frac{10}{8},\frac{10}{5} \ =\ \frac{8}{4}.
\end{align*}
increase the right side of (\ref{numberof=quo}) by $6$.
The corresponding products given by these three pairs are 
$$4\cdot 10 \ =\  1\cdot 40,  1\cdot 40 \ =\ 5\cdot 8, 4\cdot 10 = 5\cdot 8.$$
As mentioned above, the right side of (\ref{numberof=pro}) only accounts for $2$ (not $3$) out of these three pairs of equal products since $4\cdot 10 = 1\cdot 40 = 5\cdot 8$. Because $6/2 = 3>2$, we do not have Inequality (\ref{www}).
\end{enumerate}
\end{rek}
\begin{lem}\label{expo}
Let a MSTD set $A$ be chosen. Then for all $1\neq r  > 0$, $B = r^A$ is MPTQ. 
\end{lem}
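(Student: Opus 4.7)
The plan is to exploit the fundamental identity $r^x \cdot r^y = r^{x+y}$ and $r^x / r^y = r^{x-y}$, together with the injectivity of the map $x \mapsto r^x$ for $1 \neq r > 0$, to convert the MSTD hypothesis on $A$ directly into the MPTQ conclusion for $B = r^A$.

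First I would write $B \cdot B = \{r^{a_i} \cdot r^{a_j} : a_i, a_j \in A\} = \{r^{a_i + a_j} : a_i, a_j \in A\}$, and similarly $B/B = \{r^{a_i - a_j} : a_i, a_j \in A\}$. Next I would invoke injectivity of $r^{(\cdot)}$ (guaranteed because $r > 0$ and $r \neq 1$) to conclude that the map $s \mapsto r^s$ restricts to a bijection $A + A \to B \cdot B$ and a bijection $A - A \to B / B$. Hence
\[
|B \cdot B| \ =\ |A + A| \qquad \text{and} \qquad |B / B| \ =\ |A - A|.
\]

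Finally, because $A$ is MSTD we have $|A+A| > |A-A|$, so the displayed equalities give $|B \cdot B| > |B/B|$, i.e., $B$ is MPTQ.

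There is essentially no obstacle here; the proof is a one-line application of the homomorphism property of exponentiation. The only point that needs to be mentioned explicitly is why $r^{(\cdot)}$ is injective, which is exactly the role of the hypotheses $r > 0$ and $r \neq 1$ (the same hypotheses that make the $r$-exponential transformation well-defined in the notation section).
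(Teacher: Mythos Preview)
Your proof is correct and follows essentially the same approach as the paper: both arguments use the homomorphism identities $r^{x}r^{y}=r^{x+y}$ and $r^{x}/r^{y}=r^{x-y}$ together with the injectivity of $x\mapsto r^{x}$ (for $r>0$, $r\neq 1$) to establish $|B\cdot B|=|A+A|$ and $|B/B|=|A-A|$, and then invoke the MSTD hypothesis.
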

\begin{proof}
We will prove that $|B/ B| = |A-A|$ and $|B\cdot B| = |A+A|$. Given a difference $a_i-a_j$ for some $a_i, a_j\in A$, we have the corresponding quotient $r^{a_i}/r^{a_j}$. Let $a'_i, a'_j\in A$. Because $r\notin \{0,\pm 1\}$, $a_i - a_j = a'_i - a'_j$ if and only if $r^{a_i-a_j} = r^{a'_i-a'_j}$. Therefore, $|B/B| = |A-A|$. Similarly, given a sum $a_p + a_q$ for some $a_p, a_q\in A$, we have the corresponding product $r^{a_p}r^{a_q}$. Let $a'_p, a'_q \in A$. Because $r\notin \{0,\pm 1\}$, $a_p+a_q =  a'_p+a'_q$ if and only if $r^{a_p+a_q} = r^{a'_p+a'_q}$. Therefore, $|B\cdot B| = |A+A|$. This completes our proof. 
\end{proof}

\begin{lem}\label{logtrans}
Let a MPTQ set $A$ of positive numbers be chosen. Fix $1\neq r>0$. Then 
$B  = log_r A $ is MSTD. 
\end{lem}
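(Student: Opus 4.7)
The plan is to mirror the proof of Lemma \ref{expo}, exploiting the fact that $\log_r$ converts products to sums and quotients to differences, and is injective on positive reals whenever $r>0$ and $r\neq 1$. Specifically, I will establish the two equalities $|B+B|=|A\cdot A|$ and $|B-B|=|A/A|$, from which MSTD-ness of $B$ follows immediately since $A$ is MPTQ.

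First I would set up the bijection for sums. For any $a_i,a_j\in A$, the identity
\[
\log_r a_i+\log_r a_j\ =\ \log_r(a_i a_j)
\]
defines a map from pairs in $A$ to elements of $B+B$. Since $\log_r$ is a strictly monotonic (hence injective) function on the positive reals for $1\neq r>0$, we have $a_i a_j=a_i'a_j'$ if and only if $\log_r(a_i a_j)=\log_r(a_i'a_j')$, i.e.\ if and only if $\log_r a_i+\log_r a_j=\log_r a_i'+\log_r a_j'$. This gives a bijection between $A\cdot A$ and $B+B$, so $|B+B|=|A\cdot A|$.

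Next I would repeat the argument for differences via
\[
\log_r a_i-\log_r a_j\ =\ \log_r(a_i/a_j),
\]
using the same injectivity of $\log_r$ on positive reals to conclude $|B-B|=|A/A|$. Combining both equalities with the MPTQ hypothesis $|A\cdot A|>|A/A|$ gives $|B+B|>|B-B|$, so $B$ is MSTD.

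There is no real obstacle here: the only points that need care are that $A$ consists of positive numbers (so $\log_r A$ is well-defined), that the quotients $a_i/a_j$ remain positive (so their logarithms exist), and that the condition $1\neq r>0$ ensures $\log_r$ is a well-defined injection. These are already baked into the hypotheses and the definition of the $r$-log transformation given in the paper, so the proof is essentially a three-line verification dual to Lemma \ref{expo}.
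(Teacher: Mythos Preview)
Your proposal is correct and follows essentially the same approach as the paper: both arguments establish $|B+B|=|A\cdot A|$ and $|B-B|=|A/A|$ via the injectivity of $\log_r$ on positive reals, using the identities $\log_r a_i+\log_r a_j=\log_r(a_ia_j)$ and $\log_r a_i-\log_r a_j=\log_r(a_i/a_j)$, and then invoke the MPTQ hypothesis.
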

\begin{proof}
We will prove that $|B+B| = |A\cdot A|$ and $|B-B| = |A/A|$. Given a product $a_ia_j$ for some $a_i, a_j\in A$, we have the corresponding sum $\log_r a_i+\log_r a_j$ in $B+B$. Let $a'_i, a'_j$ be chosen. We have $a_ia_j = a'_ia'_j$ if and only if $\log_r a_i+\log_ra_j = \log_r a'_i + \log_r a'_j$. Hence, $|B+B| = |A\cdot A|$. Similarly, given a quotient $a_p/a_q$ for some $a_p, a_q\in A$, we have the corresponding difference $\log_r a_p - \log_r a_q$ in $B-B$. Let $a'_p, a'_q\in A$, We have $a_p/a_q = a'_p/a'_q$ if and only if $\log_r a_p - \log_r a_q = \log_r a'_p - \log_r a'_q$. Hence, $|B-B| = |A/A|$. This completes our proof. 
\end{proof}
\subsection*{Application: construction of an infinite family of MPTQ sets}
We can generate an infinite family of MSTD sets from a given MSTD set through the base expansion method. Let $A$ be a MSTD set, and let $$A_{k,m}=\bigg\{\sum_{i=1}^{k}a_im^{i-1}:a_i\in A\bigg\}.$$ If $m$ is sufficiently large, then $|A_{k,m}\pm A_{k,m}| = |A\pm
A|^k$ and $|A_{k,m}|=|A|^k$. The method is a very powerful tool and has been used extensively in the literature including \cite{He, ILMZ1, ILMZ2}. However, the base expansion method turns out to be inefficient in terms of our MSTD sets' cardinality. Due to Lemma \ref{expo} and Lemma \ref{logtrans}, we can use the base expansion method to generate an infinite family of MPTQ sets from a given MPTQ sets. 

Let $A$ be a MPTQ set of positive real numbers. By Lemma \ref{logtrans}, we know that $\log_2 A$ is a MSTD set. Now, apply the base expansion method to generate an infinite family of MSTD sets from $\log_2 A$. Due to Lemma \ref{expo}, if $B$ is a MSTD set in the family, we know that $2^B$ is a MPTQ set. 
\section{The smallest MPTQ set}
\begin{proof}[Proof of Theorem \ref{smallestcar} item 1]
We prove by contradiction. Let $A$ be a MPTQ set with $|A|\le 7$. By Lemma \ref{logtrans}, $B = \log_2 A$ is MSTD and $|B| = 7$. This contradicts \cite[Theorem 6]{Na1}. So, $|A|\ge 8$, as desired. 
\end{proof}
\begin{exa}\normalfont
An example of a MPTQ set with cardinality $8$ is 
$$S_4 \ = \ \{2^0, 2^2, 2^3, 2^4, 2^7, 2^{11}, 2^{12}, 2^{14}\}.$$
This set is the $2$-exponential transformation of the MSTD set $\{0,2,3,4,7,11,12,14\}$. Lemma \ref{expo} guarantees that $S_4$ is MPTQ. 
\end{exa}
The restriction we have in Theorem \ref{smallestcar} item 1 is that our MPTQ set only contain positive numbers. Next, we relax this condition to prove Theorem \ref{smallestcar} item 2.
We employed the same technique used by the author \cite{CI2} with a nontrivial modification of the proof for the product/quotient case. The proof is more complicated compared to the proof of \cite[Theorem 1]{CI2} because of interactions between negative and positive numbers. The next lemma follows from \cite[Proposition 7]{CI2} and the proof of Lemma \ref{expo}.
\begin{lem}\label{notfar}
Let $n\in \mathbb{N}$ and $r\in \mathbb{R}\backslash \{0,\pm 1\}$. Set $a = r^{(n-1)+k}$ for some $1\le k\le n-1$. Then $a\rightarrow G_{n,r}$ gives $k+1$ new products and $2k$ new quotients. 
\end{lem}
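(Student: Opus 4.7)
The plan is to reduce the multiplicative statement to its additive analog and invoke \cite[Proposition 7]{CI2}. Set $H_n = \{0, 1, \ldots, n-1\}$ and $m = (n-1) + k$, so that $G_{n,r} = \{r^j : j \in H_n\}$ and $a = r^m$. Since $1 \le k \le n-1$, \cite[Proposition 7]{CI2} (applied to the arithmetic progression $H_n$ with an extra element placed $k$ steps past the right endpoint) tells us that introducing $m$ into $H_n$ produces exactly $k+1$ new sums and $2k$ new differences.

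Next, I transfer this through the map $\varphi\colon\mathbb{Z}\to\mathbb{R}$, $\varphi(x)=r^x$. Because $|r|\neq 1$, $\varphi$ is injective on $\mathbb{Z}$: from $r^x=r^y$ one derives $|r|^x=|r|^y$ and hence $x=y$. Moreover, $\varphi(x+y)=\varphi(x)\varphi(y)$ and $\varphi(x-y)=\varphi(x)/\varphi(y)$ for integer exponents. Exactly as in the proof of Lemma \ref{expo}, combining these two facts yields
\[
r^{x_1}r^{x_2}=r^{x_3}r^{x_4} \iff x_1+x_2=x_3+x_4, \qquad r^{x_1}/r^{x_2}=r^{x_3}/r^{x_4} \iff x_1-x_2=x_3-x_4,
\]
for all integers $x_1,x_2,x_3,x_4$. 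Consequently, $\varphi$ induces a bijection between the sumset of $H_n\cup\{m\}$ and the product set of $G_{n,r}\cup\{a\}$, and between the difference set and the quotient set, each respecting the split into elements already present before adjoining $m$ (resp.\ $a$) versus genuinely new ones. The counts $k+1$ and $2k$ therefore transfer verbatim from the additive setting to the multiplicative one.

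The only subtle point is that Lemma \ref{expo} is stated for $r>0$, whereas Lemma \ref{notfar} also allows $r<0$. For negative $r$, integer exponents keep $r^x$ unambiguous and the injectivity argument above still goes through, so the correspondence is unaffected. No further estimates are needed; the substance of the argument is the log/exp correspondence combined with the known additive fact from \cite{CI2}, and I expect the only thing worth writing out in the actual proof is the two-line verification of injectivity for negative $r$.
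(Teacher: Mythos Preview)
Your proposal is correct and follows exactly the route the paper indicates: the paper's proof consists solely of the sentence ``The next lemma follows from \cite[Proposition 7]{CI2} and the proof of Lemma \ref{expo},'' and you have simply unpacked that reference, including the minor adjustment needed to cover negative $r$.
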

\begin{thm}\label{1joingeo}
Let $n\in\mathbb{N}$ and $r\in\mathbb{R}\backslash \{0,\pm 1\}$. For all $a\in \mathbb{R}\backslash \{0\}$, the set $G_{n,r}\cup \{a\}$ is not MPTQ.
\end{thm}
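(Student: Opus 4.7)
The plan is to count, for each $a$, the number $P$ of new products and $Q$ of new quotients produced when we adjoin $a$ to $A := G_{n,r}$, and show $P \le Q$. Since $r^{n-1}/G_{n,r} = \{r^{n-1-i} : 0 \le i \le n-1\} = G_{n,r}$, the set $A$ is symmetric with respect to $r^{n-1}$ and hence balanced by the lemma preceding Proposition \ref{trivialbounds}, so $|A\cdot A| = |A/A| = 2n-1$; therefore $A \cup \{a\}$ is MPTQ if and only if $P > Q$. The candidate new products are $\{a, ar, \ldots, ar^{n-1}, a^2\}$ (at most $n+1$ values), and the candidate new quotients are $\{a/r^i, r^i/a : 0 \le i \le n-1\}$ (at most $2n$ values).

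I would split into three cases according to the multiplicative relation of $|a|$ to $|r|$. \textbf{Case 1:} $|a|^2$ is not an integer power of $|r|$. Then matching magnitudes rules out every coincidence: no $ar^i$ or $a^2$ lies in $A\cdot A = \{r^k : 0 \le k \le 2n-2\}$, no $a/r^i$ or $r^i/a$ lies in $A/A$, and $a/r^i \neq r^j/a$ (which would force $|a|^2 = |r|^{i+j}$). Hence $P = n+1 \le 2n = Q$. \textbf{Case 2:} $|a|^2 = |r|^m$ with $m$ odd, so $|a|$ is a ``half-integer power'' of $|r|$. The products $ar^i$ and the quotients $a/r^i, r^i/a$ still miss their respective target sets by magnitude, but a sign check shows $a^2$ lies in $A\cdot A$ precisely when $r > 0$ and $0 \le m \le 2n-2$, dropping $P$ from $n+1$ to $n$. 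The only coincidences among the new quotients are pairs $a/r^i = r^j/a$ with $i+j = m$, of which there are at most $\min(m+1, 2n-1-m) \le n$; hence $Q \ge n = P$. \textbf{Case 3:} $|a| = |r|^t$ for some integer $t$, so $a = \pm r^t$. The sub-case $a = r^t$ with $0 \le t \le n-1$ is degenerate ($a \in A$, so $A\cup\{a\} = A$ is balanced); the sub-case $a = r^t$ with $t \notin \{0, \ldots, n-1\}$ is covered by Lemma \ref{notfar} applied for $t \ge n$ and by a symmetric argument for $t < 0$, yielding $P = k+1 \le 2k = Q$ for the relevant $k \ge 1$; and the sub-case $a = -r^t$ has all $ar^i$ automatically outside $A\cdot A$ by a sign mismatch, so the analysis parallels Case 2 with the coincidences $a/r^i = r^j/a$ governed by $i+j = 2t$.

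The main obstacle is the delicate bookkeeping in Case 2 and the $a = -r^t$ sub-case of Case 3, where $a^2 \in A\cdot A$ forces $P$ to drop by one while internal coincidences among the new quotients also reduce $Q$. The critical elementary inequality is that for every $\ell \in \{0,\ldots,2n-2\}$, the number of pairs $(i,j) \in \{0,\ldots,n-1\}^2$ with $i+j = \ell$ is at most $n$; this bound is precisely what is needed to keep $Q \ge P$ once $a^2$ has been absorbed into $A\cdot A$. For $r < 0$, the alternating signs in $A = \{(-1)^i|r|^i : 0 \le i \le n-1\}$ require an extra sign check when verifying that $ar^i \notin A\cdot A$ and $a^2 \notin A\cdot A$ (with $m$ odd), but this check is automatic and does not affect the counting.
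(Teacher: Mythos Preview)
Your argument is essentially correct, but it is considerably more laborious than the paper's, and there is one small gap. On the gap: Lemma~\ref{notfar} as stated only covers $a=r^{(n-1)+k}$ with $1\le k\le n-1$, i.e.\ $n\le t\le 2n-2$; you invoke it for all $t\ge n$. The missing range $t\ge 2n-1$ is trivial (all $2n$ candidate quotients are new, so $Q=2n\ge n+1\ge P$), but you should say so explicitly. A second expository point: in your Case~2 and in the $a=-r^t$ sub-case, the inequality $Q\ge n=P$ is only stated for the situation where $a^2\in A\cdot A$; you should make explicit the logical link that any coincidence $a/r^i=r^j/a$ forces $a^2=r^{i+j}\in A\cdot A$, so that the complementary situation ($a^2\notin A\cdot A$) automatically has no coincidences and hence $Q=2n\ge n+1=P$.

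The paper's decomposition is coarser and the main case is handled by a neater trick. The paper splits only on whether $a$ is an integer power of $r$: if $a=r^\ell$ it cites Lemma~\ref{notfar} (plus the trivial far case), exactly as you do; but if $a\neq r^\ell$ for every $\ell\in\mathbb Z$, it avoids your entire magnitude/parity/sign bookkeeping. It simply observes that the $n$ quotients $K=\{a,a/r,\dots,a/r^{n-1}\}$ are automatically new and pairwise distinct, and then asks whether $1/a\in K$. If $1/a\notin K$ one has $n+1$ new quotients and is done; if $1/a\in K$ then $a^2\in G_{n,r}\subseteq A\cdot A$, so $P\le n=|K|\le Q$. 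This two-line dichotomy replaces your Cases~1, 2, and the $a=-r^t$ half of Case~3 in one stroke. Your finer splitting does buy something---it gives the exact values of $P$ and $Q$ rather than just $P\le Q$---but for the theorem as stated that extra precision is not needed.
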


\begin{proof}
If $a\in G_{n,r}$, then we are done since $G_{n,r}$ is symmetric with respect to $r^{n-1}$ and thus, not MPTQ. For $n = 1$, we have $G_{1,r} =\{1,a\}$, which is symmetric with respect to $a$ and thus, not MPTQ. We assume that $a\notin G_{n,r}$ and $n\ge 2$. The number of new products as a result of $a\rightarrow G_{n,r}$ is at most $n+1$. We consider the following two cases. 

\noindent \textbf{Case 1:} $a = r^\ell$ for some $\ell\in \mathbb{N}_{>n-1}$. If $\ell = n$, we have $G_{n,r}\cup \{a\} = G_{n+1,r}$, which is not MPTQ. Consider $\ell\ge n+1$. Write $\ell = (n-1)+k$ for some $k\ge 2$.
\begin{itemize}
    \item If $2\le k\le n-1$, by Lemma \ref{notfar}, we have $k+1$ new products while $2k$ new quotients. So, our new set is not MPTQ.
    \item If $k> n-1$, then we have $2n$ new quotients. Since we have at most $n+1$ new products, our new set is not MPTQ. 
\end{itemize} 

\noindent \textbf{Case 2:} $a = r^{\ell}$ for some $\ell\in \mathbb{N}_{<0}$. Due to symmetry, this is similar to Case 1. 

\noindent \textbf{Case 3:} $a \neq r^\ell$ for all $\ell\in \mathbb{Z}$. Our set of new quotients contains
$$K \ =\ \bigg\{a,\frac{a}{r},\ldots,\frac{a}{r^{n-1}}\bigg\}.$$
\begin{itemize}
    \item If $1/a\in K$, then $a^2 \in G_{n,r}$. So, the number of new products is at most $n$. Because $|K| = n$, we know that our new set is not MPTQ.
    \item If $1/a\notin K$, then we have at least $n+1$ new quotients. Again, our new set is not MPTQ.
\end{itemize}
We have completed the proof. 
\end{proof}
\begin{cor}\label{geoprog}
A finite set containing numbers in a geometric progression in union with an arbitrary number is not MPTQ.
\end{cor}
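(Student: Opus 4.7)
The plan is to reduce the general case to Theorem \ref{1joingeo} by a scaling argument. A finite geometric progression has the form $S = \{b, br, br^2, \ldots, br^{n-1}\}$ for some $b \in \mathbb{R}\setminus\{0\}$ and $r \in \mathbb{R}\setminus\{0, \pm 1\}$ (if $r = \pm 1$, then $S$ has at most two elements and the claim is trivial or follows because two-element sets are balanced). Thus $S = b \cdot G_{n,r}$, so $S \cup \{a\} = b \cdot (G_{n,r} \cup \{a/b\})$.

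First I would observe the scaling invariance: for any nonzero constant $c$ and any finite set $A \subset \mathbb{R}\setminus\{0\}$, the map $x \mapsto cx$ gives a bijection from $A \cdot A$ onto $c^2(A \cdot A)$ and from $A/A$ onto $A/A$. In particular,
\begin{equation*}
|(cA) \cdot (cA)| \ =\ |A \cdot A| \quad \text{and} \quad |(cA)/(cA)| \ =\ |A/A|,
\end{equation*}
so $cA$ is MPTQ if and only if $A$ is MPTQ. Taking $c = b$ and $A = G_{n,r} \cup \{a/b\}$, we see that $S \cup \{a\}$ is MPTQ if and only if $G_{n,r} \cup \{a/b\}$ is MPTQ.

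Next I would apply Theorem \ref{1joingeo} with the arbitrary real number $a/b \in \mathbb{R}\setminus\{0\}$ in place of $a$, which directly gives that $G_{n,r} \cup \{a/b\}$ is not MPTQ. Combining with the scaling invariance, $S \cup \{a\}$ is not MPTQ, completing the proof.

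There is essentially no main obstacle here; the only subtlety is handling the degenerate cases where the geometric progression has ratio $\pm 1$ or has fewer than two distinct terms (in which case the union is a set of size at most three and therefore cannot be MPTQ, by Theorem \ref{smallestcar}), and verifying that the scaling factor $b$ is nonzero so that $a/b$ is a well-defined element of $\mathbb{R}\setminus\{0\}$ (note $a \neq 0$ by our standing hypothesis that all elements lie in $\mathbb{R}\setminus\{0\}$). Once these small checks are dispensed with, the corollary is an immediate consequence of Theorem \ref{1joingeo}.
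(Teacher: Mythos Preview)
Your proof is correct and follows essentially the same approach as the paper: divide through by the first term of the geometric progression to reduce to $G_{n,r}\cup\{\text{something}\}$, then invoke Theorem~\ref{1joingeo}. The paper's version is terser (it simply writes $A/a = G_{n,r}\cup\{b/a\}$ and cites the theorem), while you spell out the scaling invariance and the degenerate cases more explicitly, but the substance is identical.
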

\begin{proof}
Let our set be $A = \{a, ar, ar^2, \ldots, ar^{n-1},b\}$, where $n\in\mathbb{N}, ab\neq 0$, $r\notin \{0,\pm 1\}$. Then, $A/a = \{1, r, r^2, \ldots, r^{n-1}, b/a\} = G_{n,r}\cup \{b/a\}$, which is not MPTQ by Theorem \ref{1joingeo}. Hence, $A$ is not MPTQ.
\end{proof}

\begin{proof}[Proof of Theorem \ref{smallestcar} item 2]
Let $A$ be our finite set of positive numbers. We analyze $5$ cases corresponding to the cardinality of $A$.

\noindent Case 1: $|A| = 1$. Write $A = \{a_1\}$ for some $a_1\in \mathbb{R}\backslash \{0\}$. Because $A$ is symmetric with respect to $a_1^2$, $A$ is not MPTQ.

\noindent Case 2: $|A| = 2$. Write $A = \{a_1,a_2\}$ for some $a_1,a_2\in\mathbb{R}\backslash \{0\}$. Because $A$ is symmetric with respect to $a_1a_2$, $A$ is not MPTQ.

\noindent Case 3: $|A| = 3$. Write $A = \{a_1,a_2,a_3\}$ for some $a_1,a_2,a_3\in \mathbb{R}\backslash \{0\}$. Consider $A/a_1 = \{1,a_2/a_1,a_3/a_1\}$. Either $a_2/a_1\neq -1$ or  $a_3/a_1\neq -1$. Without loss of generality, assume that $a_2/a_1\neq -1$. Because $\{1, a_2/a_1\} = G_{2, a_2/a_1}$, Theorem \ref{1joingeo} says that $A/a_1 = G_{2,a_2/a_1} \cup \{a_3/a_1\}$ is not MPTQ. Hence, $A$ is not MPTQ. 

\noindent Case 4: $|A| = 4$. Write $A = \{a_1, a_2, a_3, a_4\}$ for some $0<|a_1|\le |a_2|\le |a_3|\le |a_4|$. By Proposition \ref{trivialbounds}, we know that $\max |A\cdot A| = 10$, while $\max |A/A| = 13$.  Since we have only $4$ numbers, we do not have $a_i \cdot  a_j = a_m \cdot a_n = a_p\cdot a_q$ for all $a_i, a_j, a_m, a_n, a_p, a_q\in A$ and $a_i,a_j, a_m, a_p, a_q$ being pairwise different. Let $$k=\sum_{q\in A/A, q\neq 1, |q|\ge 1}(|(A/A)_q| - 1),$$
then we can apply Remark \ref{deepex} Case 1 to have $$\sum_{p\in A\cdot A}(|(A\cdot A)_p|-1)\ \ge\ k/2.$$ In order that $A$ is MPTQ, it must be that
\begin{align}
    13 - 2k \ <\ 10 - k/2 \label{case4}.
\end{align}
Solving for $k$, we have $k\ge 3$. Therefore, $|A/A| \le 13 - 6 = 7$. For $1\le i\le 3$, set $m_i = a_{i+1}/a_i$. Note that $|m_i| \ge 1$ and $m_i \neq 1$. Then
$$A = (a_1\,|\,m_1,m_2,m_3).$$
We have $6$ distinct quotients 
$$K \ =\ \{1, m_1, m_1m_2, m_1m_2m_3, (m_1m_2)^{-1}, (m_1m_2m_3)^{-1}\}.$$

Subcase 4.1: $m_1\neq -1$. Then $(m_1)^{-1}$ is another distinct quotient. Because $|A-A|\le 7$, we have $m_2\in K\cup\{(m_1)^{-1}\}$. The only possible option is that $m_2 = m_1$. Then $\{a_1,a_2,a_3\}$ is a geometric progression. By Corollary \ref{geoprog}, $A$ is not MPTQ.

Subcase 4.2: $m_1 = -1$. Then $m_2\neq m_1$ because if not, $m_1m_2 = 1$ or $a_1 = a_3$, a contradiction. Either $m_2\notin K$ or we have $m_2 \in \{m_1m_2m_3, (m_1m_2m_3)^{-1}\}$.
\begin{itemize}
    \item Subcase 4.2.1: $m_2\notin K$. Then $(m_2)^{-1}\in K\cup\{m_2\}$. The only option is $(m_2)^{-1} \in\{ (m_1m_2m_3)^{-1}, m_1m_2m_3\}$. So, $m_3 = -1$. Our set  $$A\ =\ \{a_1,-a_1,-a_1m_2,a_1m_2\},$$ which is symmetric with respect to $a_1^2m_2$ and thus, not MPTQ.
    \item Subcase 4.2.2: $m_2\in K$. The only option is $m_2 \in \{(m_1m_2m_3)^{-1},m_1m_2m_3\}$, or equivalently, $m_1m_3 = 1$. Again, we have $m_3 = -1$. According to Subcase 4.2.1, our set is not MPTQ. 
\end{itemize}

We complete our proof that $|A|\ge 5$. 
\end{proof}
\section{Sequences with no MPTQ subsets}

\begin{proof}[Proof of Theorem \ref{thm:gen}]
Let $S = \{s_1, s_2, . . . , s_k\} = \{a_{g(1)}, a_{g(2)}, . . . , a_{g(k)}\}$ be a finite subset of $A$, where $g: \mathbb{Z}^+\rightarrow\mathbb{Z}^+$ is a strictly increasing function. We show that $S$ is not MPTQ
by strong induction on $g(k)$. 

For the base case, we know that all MPTQ sets have at least $5$ elements due to Theorem \ref{smallestcar} item 2,
so any subset $S$ of $A$ with exactly $k$ elements is not a MPTQ set if $k \le 4$; in particular,
$S$ is not a MPTQ set if $g(k)\le 4$. Thus we may assume for $g(k) \ge 5$ that all $S'$ of the
form $\{s_1, . . . , s_{k-1}\}$ with $|s_{k-1}| \le |a_{g(k)}|$ are not MPTQ sets. The proof is completed by
showing $S = S' \cup \{a_{g(k)}\} = \{s_1, . . . , s_{k-1}, a_{g(k)}\}$ is not MPTQ sets for any $a_{g(k)}$.

For the inductive step,  $S'$ is not a MPTQ set by the inductive assumption. If $k \le 2r-1$ then $|S| \le 2r-1$ and $S$ is not a MPTQ set by the second assumption of the theorem. 
If $k\ge 2r$, consider the number of new products and quotients obtained by adding $a_{g(k)}$. As we have at most 
$k$ new products, we are done if there are at least $k$ new quotients.

Since $k \ge 2r$, we have $k - \floor{\frac{k+1}{2}} \ge r$. Let $t = \floor{\frac{k+1}{2}}$. Then $t \le k - r$, which implies $|s_{t}| \le |s_{k-r}|$. The largest quotient in absolute value between elements in $S'$ is $|s_{k-1}/s_1|$ and the smallest is $|s_1/s_{k-1}|$; we now show that we have added at least $k$ distinct quotients whose absolute values are either greater than $|s_{k-1}/s_1|$ or smaller than $|s_1/s_{k-1}|$, which will complete the proof. We have
\begin{align*}
|a_{g(k)}/s_{t}| &\ \ge \ |a_{g(k)}/s_{k-r}| \ = \ |a_{g(k)}/a_{g(k-r)}|  \nonumber\\
                   &\ \ge \ |a_{g(k)}/a_{g(k)-r}| \nonumber\\
                   &\  >  \ |a_{g(k)-1}/a_1| & \text{(by the first assumption on $\{a_n\}$)} \nonumber\\
                   &\ \ge \ |s_{k-1}/a_{1}| \ = \  |s_{k-1}/s_{1}|.
\end{align*}
Since $|a_{g(k)}/s_t| > |s_{k-1}/s_1|$, we know that
$$a_{g(k)}/s_t, \ldots, a_{g(k)}/s_2, a_{g(k)}/s_1$$
are $t$ quotients whose absolute values are greater than $|s_{k-1}/s_1|$. As we could do division in the opposite order, we have $t$ quotients who absolute values are smaller than $|s_1/s_{k-1}|$. Therefore, the total number of new quotients is at least
$$2t \ =\ 2\bigg\lfloor\frac{k+1}{2}\bigg\rfloor\ \ge \ k.$$
This completes our proof. 
\end{proof}

\begin{proof}[Proof of Theorem \ref{logprimenoMSTD}]
We first prove item 1. Consider $A = \{a_1, a_2,\ldots, a_n\}\subset P$ for some $n\in \mathbb{N}$ and $a_1 < a_2< \cdots < a_n$. Due to Theorem \ref{smallestcar} item 1, it suffices to prove the following claim: if $A\backslash\{a_n\}$ is not MPTQ, then $A$ is not MPTQ. In particular, we will prove that $a_{n}\rightarrow A\backslash\{a_n\}$ gives more new quotients than new products. Clearly, $a_{n}\rightarrow A\backslash\{a_n\}$ gives at most $n$ new products. The following are new quotients
$$\frac{a_{n}}{a_1},\frac{a_{n}}{a_2},\ldots, \frac{a_{n}}{a_{n-1}}.$$
Indeed, suppose that $a_{n}/a_j = a_m/a_k$ for some $1\le m,k,j\le n-1$. Then $a_{n}a_k = a_ma_j$, implying that either $a_m|a_k$ or $a_m|a_{n}$, which contradicts that $a_k,a_{n}\in P$. Hence, we have $n-1$ new quotients greater than $1$. Their reciprocals must also be new. Therefore, we have $2(n-1)$ new quotients. For $n\ge 8$, $2(n-1)>n$, and so, $A$ is not MPTQ. Again, the reason we only concern with $n\ge 8$ is due to Theorem \ref{smallestcar} item 1.

We proceed to prove item 2. Fix $r>0$ and $r\neq 1$. We prove by contradiction. Suppose that $P_r$ contains a MSTD subset $A$. By Lemma \ref{expo}, $r^{A}\subset P$ is MPTQ, implying that $P$ contains a MPTQ subset. This contradicts item 1 above. 
\end{proof}
\section{Questions}
We end with a list of questions for future research.
\begin{itemize}
    \item The diameter of a set is defined to be the difference between the maximum and the minimum. What is the smallest diameter of a MPTQ sets? What is the smallest $n$ such that $\{1,2,\ldots,n\}$ has a MPTQ subset? 
    \item Can we construct MPTQ sets explicitly without using MSTD sets and Lemma \ref{expo}? A conventional method of constructing MSTD sets is to fix a fringe pair $(L,R)$ of two sets containing elements to be used in the fringe of the interval and argue that all the middle elements will appear. The fringe pair ensures that some of the largest and smallest differences are missed and that our set is MSTD. For MSTD sets, you can use the fringes since it is possible to manage the interaction (addition and subtraction) of numbers in the middle. For example, $(n-9) + 4 = (n-10) + 5$. However, for multiplication, $4(n-9)$ is not necessarily equal to $5(n-10)$. Because it is hard to work with the middle of MPTQ sets, it is not clear how to use the fringes to construct MPTQ sets as a result. 
    \item Is there a set that is both MSTD and MPTQ? 
\end{itemize}
\subsection*{Acknowledgement}
The author would like to thank Carlo Sanna for providing the proof of Theorem \ref{goto0}. Many thanks to the anonymous referee for useful comments that help improve this paper.

\bigskip

\end{document}